\newtheorem{theo}{Theorem}[section]
\newtheorem{lem}[theo]{Lemma}
\newtheorem{prop}[theo]{Proposition}
\begin{document}

\title{On Mobius and Liouville functions of order $k$ }
\author{Yusuke Fujisawa}

\maketitle
\begin{abstract}
Let $F$ be a number field, $k$ a positive integer. 
In this paper, 
we define the Mobius and Liouville functions of order $k$ in $F$. 
We give a formula about the partial sums of them by using elementary number theory 
and complex analysis.  
Moreover, we also consider the number of 
$k$-free ideals of the integer ring of $F$. 
\end{abstract}

\section{Introduction}

For a large number $x>0$,  there are classical statements that 
$\sum_{n \leq x}\mu(n)=O(x \exp(-c\sqrt{\log x}))$ and 
$\sum_{n \leq x}\lambda(n)=O(x \exp(-c\sqrt{\log x}))$ 
where  $\mu$ (resp. $\lambda$) is the Mobius (resp. Liouville) function. 
Here and throughout 
$c$  denotes a positive constant not necessarily the same at different occurrences. 
 It is also well-known that the number of square-free positive integers not 
 exceeding $x$ is  $\frac{6}{\pi^2}x+O(\sqrt{x})$ (see e.g. Montgomery and Vaughan \cite{montgomery} ).
Our purpose is to generalize these classical results.

Let $F$ be a number field of degree $d$, $O_F$ the integer ring of $F$, 
$I_F$ the set of all non-zero ideals of $O_F$. 
Throughout this paper, $A, B, C, D, E$ are non-zero ideals of $O_F$ and $P$ is a prime ideal 
of $O_F$. 

Let $k$ be a positive integer. 
Now, we introduce the Mobius and  Liouville functions of order $k$ in $F$. 
The function $\mu_{k, F}:I_f \rightarrow \{ 0 \pm 1 \}$ is defined as follows. 
For a prime ideal $P$ and a non-negative integer $m$, put
\begin{align*}
\mu_{k, F}(P^m)=
\begin{cases}
1 \;\;\;\;\; &\textrm{if} \; m < k,  \\
-1 \;\;\;\;\; &\textrm{if} \; m = k, \\
0 \;\;\;\;\;\; &\textrm{if} \; k< m.
\end{cases}
\end{align*}
For $A=P_1^{e_1}\cdots P_g^{e_g} \in I_F$ where $P_1$, $\cdots$, $P_g$ 
are distinct prime ideals and $e_1$, $\cdots$, $e_g$ are positive integers, 
we put 
\begin{align*}
\mu_{k, F}(A)=\prod_{i=1}^{g}\mu_k(P_i^{e_i}). 
\end{align*}

The function $\lambda_{k, F}: I_F \rightarrow \{ 0, \pm 1 \}$ is defined as follows. 
For a prime ideal $P$ and a non-negative integer $m$, put
\begin{align*}
\lambda_{k, F}(P^m)=
\begin{cases}
1 \;\;\;\;\; &\textrm{if} \; m \equiv 0 \mod k+1,  \\
-1 \;\;\;\;\; &\textrm{if} \; m \equiv 1 \mod k+1, \\
0 \;\;\;\;\;\; &\textrm{otherwise}.
\end{cases}
\end{align*}
For $A=P_1^{e_1}\cdots P_g^{e_g} \in I_F$ where $P_1$, $\cdots$, $P_g$ 
are distinct prime ideals and $e_1$, $\cdots$, $e_g$ are positive integers, 
we put 
\begin{align*}
\lambda_{k, F}(A)=\prod_{i=1}^{g}\lambda_k(P_i^{e_i}). 
\end{align*}

For simplicity, we denote $\mu_{k, F}$(resp. $\lambda_{k, F}$) by 
$\mu_k$(resp. $\lambda_k$) since we fix a number field $F$. 
Note that 
$\mu_{1}$(resp. $\lambda_1$) is the ordinary Mobius (resp. Lioville) function.

For a large real number $x>0$, we put 
\begin{align*}
M_k(x) :=\sum_{N(A) \leq x}\mu_k(A), \\
L_k(x) :=\sum_{N(A) \leq x}\lambda_k(A), 
\end{align*}
and 
\begin{align*}
Q_k(x) :=\sum_{N(A) \leq x}|\mu_{k-1}(A)|
\end{align*}
where $N(A)$ is the norm of $A$, i. e., $N(A)=[O_F:A]$.

For an ideal $A \in I_F$, we say $A$ {\it $k$-free} if there is no prime ideal $P$ such that $P^k|A$. 
By the definition, $|\mu_{k-1}|$ is the characteristic function of $k$-free ideals, and $Q_k(x)$ is the number of $k$-free ideals of $O_F$ whose norm is less than or equal to $x$.


\begin{theo}\label{mobius}
For a integer $k \geq 2$, we have 
\begin{align*}
M_k(x)=
\frac{c_F}{\zeta_F(k)}x \sum_{A \in I_F}
\frac{\mu_1(A)J_1(A)}{J_k(A)N(A)}
+ O \left( x^\frac{1}{k} \log x\right)
\end{align*}
where $J_k(A)=N(A)^k \prod_{P|A}\left( 1-N(P)^{-k} \right)$, 
$\zeta_F(s)$ is the Dedekind zeta function of $F$, and $c_F$ is the residue of 
$\zeta_F(s)$ at $s=1$.
\end{theo}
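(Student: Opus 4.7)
The plan is to rewrite $\mu_k$ as a Dirichlet convolution and then carry out a hyperbola-style summation. Since $\mu_k$ is multiplicative, its Dirichlet series factors as an Euler product
\[
D_{\mu_k}(s) := \sum_{A \in I_F}\frac{\mu_k(A)}{N(A)^s} = \prod_P\Bigl(\sum_{m=0}^{k-1}N(P)^{-ms} - N(P)^{-ks}\Bigr).
\]
Setting $u = N(P)^{-s}$ and using $\sum_{m=0}^{k-1}u^m - u^k = (1-2u^k+u^{k+1})/(1-u)$ yields the factorisation
\[
D_{\mu_k}(s) = \frac{\zeta_F(s)}{\zeta_F(ks)}\,G(s),\qquad G(s) := \prod_P\frac{1 - 2N(P)^{-ks}+N(P)^{-(k+1)s}}{1 - N(P)^{-ks}},
\]
and $G(s)$ converges absolutely for $\Re(s) > 1/k$ because $k\geq 2$. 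Recognising $\zeta_F(s)/\zeta_F(ks)$ as the Dirichlet series of $|\mu_{k-1}|$, this translates back into the arithmetic identity $\mu_k = |\mu_{k-1}| * g$, where $g$ is the multiplicative function determined by $g(P^{jk}) = -1$ and $g(P^{jk+1}) = 1$ for $j\geq 1$, with $g(P^m)=0$ at the remaining positive prime powers.

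With this in hand I interchange summations to get
\[
M_k(x) = \sum_{N(B)\leq x} g(B)\,Q_k\!\bigl(x/N(B)\bigr),
\]
and substitute the asymptotic $Q_k(y) = \frac{c_F}{\zeta_F(k)}\,y + O(y^{1/k})$ for the $k$-free counting function (treated separately in the paper). This extracts a main term $\frac{c_F}{\zeta_F(k)}\,x\sum_{N(B)\leq x}g(B)/N(B)$, which I complete to $\frac{c_F}{\zeta_F(k)}\,x\,G(1)$ by absorbing the tail into the error. The leading coefficient is then identified with the sum in the statement via the purely algebraic Euler product identity
\[
\frac{1 - 2N(P)^{-k} + N(P)^{-(k+1)}}{1 - N(P)^{-k}} = 1 - \frac{N(P)-1}{N(P)(N(P)^k - 1)} = 1 + \frac{\mu_1(P)J_1(P)}{J_k(P)N(P)},
\]
which on taking the product over $P$ and expanding yields $G(1) = \sum_A \mu_1(A)J_1(A)/(J_k(A)N(A))$.

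The hardest step is the error estimate. Because $g$ is supported on those ideals all of whose prime exponents are at least $k$ (the so-called $k$-full ideals), a standard hyperbola count gives the partial-sum bound $S_{|g|}(y) := \sum_{N(B)\leq y}|g(B)| \ll y^{1/k}$. Two Abel summations, one for the tail of the completed main-term sum and one for the contribution of the $O(y^{1/k})$ remainder in $Q_k$, then produce
\[
x\sum_{N(B) > x}\frac{|g(B)|}{N(B)} \ll x^{1/k}, \qquad x^{1/k}\sum_{N(B)\leq x}\frac{|g(B)|}{N(B)^{1/k}} \ll x^{1/k}\log x,
\]
the logarithm arising precisely because $s = 1/k$ is the abscissa of convergence of $\sum_B|g(B)|/N(B)^s$. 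Combining these two bounds yields the claimed error $O(x^{1/k}\log x)$ and completes the argument.
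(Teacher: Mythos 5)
Your route is genuinely different from the paper's: you factor the Dirichlet series of $\mu_k$ as $\zeta_F(s)\zeta_F(ks)^{-1}G(s)$, i.e.\ $\mu_k=|\mu_{k-1}|*g$ with $g$ supported on $k$-full ideals, whereas the paper uses Apostol's identity $\mu_k(A)=\sum_{D^k\mid A}\mu_{k-1}(A/D^k)\mu_{k-1}(A/D)$ and reduces $M_k(x)$ to the coprime-restricted $k$-free counts $G_D(x/N(D)^k)$ of Lemma \ref{functionF}. Your local computations check out: the Euler factor $(1-2u^k+u^{k+1})/(1-u)$, the identification $G(1)=\sum_A\mu_1(A)J_1(A)/(J_k(A)N(A))$, the bound $\sum_{N(B)\le y}|g(B)|\ll y^{1/k}$ coming from the $k$-full support, and the two Abel summations at the end.

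The gap is the input $Q_k(y)=\frac{c_F}{\zeta_F(k)}y+O(y^{1/k})$, which you attribute to the paper. That error term is classical over $\mathbb{Q}$, but the paper's Theorem \ref{qk} only gives $O(y^{(d-1)/(d+1)})$ for a general field of degree $d$ (with logarithms in a few small cases), because the remainder in the ideal count $[y]_F=c_Fy+O(y^{(d-1)/(d+1)})$ propagates into $Q_k$; the exponent $1/k$ is available only when $(d-1)/(d+1)\le 1/k$. As soon as $k(d-1)>d+1$ (already $d=2$, $k=4$, or $d\ge 4$, $k=2$), inserting the true remainder $O((x/N(B))^{(d-1)/(d+1)})$ into your convolution sum gives $x^{(d-1)/(d+1)}\sum_{N(B)\le x}|g(B)|N(B)^{-(d-1)/(d+1)}\asymp x^{(d-1)/(d+1)}$, since that Dirichlet series converges at $s=(d-1)/(d+1)>1/k$; your argument then yields only the error $O(x^{\max(1/k,\,(d-1)/(d+1))}\log x)$, not $O(x^{1/k}\log x)$. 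You would need either to restrict to $F=\mathbb{Q}$ and the small $(d,k)$ cases, or to supply a $k$-free count in number fields with remainder $O(y^{1/k})$, which is not proved in this paper. (To be fair, the paper's own proof hits the same bottleneck: the bound $O(x^{1/k}\sigma_{-\alpha}(A))$ asserted in Lemma \ref{functionF} is really $O(x^{\alpha}\sigma_{-\alpha}(A))$ when $k\alpha>1$, and $\alpha$ cannot be taken below $(d-1)/(d+1)$; but you should not cite a $Q_k$ error term stronger than anything the paper establishes.)
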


An analogue of the above theorem is shown by Apostol \cite{apostol} when $F=\mathbb{Q}$.

\begin{theo}\label{liouville} Let $k$ be a positive integer. 
There exist  positive constants $c$ and $c'$ such that
\begin{align*}
L_k(x)\ll \zeta((k+1)(1-c/\sqrt{\log x} ))x\exp(-c'\sqrt{\log x}).
\end{align*} 
In addition, 
it holds that 
\begin{align*}
L_k(x)=O(x^{\frac{1}{2}+\varepsilon})
\end{align*}
 for any $\varepsilon >0$ 
if and only if the grand Riemann Hypothesis (GRH) holds for $\zeta_F(s)$. 
\end{theo}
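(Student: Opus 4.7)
I would proceed by Dirichlet series and Perron-type contour integration, closely parallel to the classical proof of the bound for $\sum_{n\leq x}\lambda(n)$. First I compute the Euler factor at a prime ideal $P$: by the definition of $\lambda_k$,
\begin{align*}
\sum_{m=0}^{\infty}\lambda_k(P^m)N(P)^{-ms}=(1-N(P)^{-s})\sum_{j=0}^{\infty}N(P)^{-j(k+1)s}=\frac{1-N(P)^{-s}}{1-N(P)^{-(k+1)s}},
\end{align*}
so multiplying over all primes gives the generating series
\begin{align*}
D_k(s):=\sum_{A\in I_F}\frac{\lambda_k(A)}{N(A)^s}=\frac{\zeta_F((k+1)s)}{\zeta_F(s)}\qquad(\Re(s)>1).
\end{align*}
The simple pole of $\zeta_F(s)$ at $s=1$ is cancelled in $D_k(s)$, so no main term will arise from a residue at $s=1$.

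For the first estimate, I would apply the truncated Perron formula at height $T$ to write $L_k(x)$ as a vertical integral of $D_k(s)\,x^s/s$ on $\Re(s)=1+1/\log x$ plus a tail error, then shift the contour to $\Re(s)=\sigma_0:=1-c/\sqrt{\log x}$. The classical Landau-type zero-free region for $\zeta_F$, namely $\zeta_F(s)\neq 0$ and $|1/\zeta_F(s)|\ll \log(|t|+2)$ on $\sigma\geq 1-c/\log(|t|+2)$, contains the entire shifting rectangle provided $T\leq\exp(c''\sqrt{\log x})$, and no singularities are crossed because $(k+1)\sigma_0>1$ for large $x$ and the pole of $\zeta_F(s)$ has already been cancelled. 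The trivial Euler-product bound $|\zeta_F((k+1)s)|\leq \zeta_F((k+1)\sigma_0)$ on the shifted line supplies the zeta factor in the advertised inequality, while the usual balancing of the vertical integral against the horizontal segments and the Perron tail with $T=\exp(c''\sqrt{\log x})$ produces the $\exp(-c'\sqrt{\log x})$ savings.

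For the equivalence with GRH: assuming GRH holds for $\zeta_F$, the contour may be pushed all the way to $\Re(s)=1/2+\varepsilon$; on that line $\zeta_F((k+1)s)$ is bounded (since $(k+1)(1/2+\varepsilon)>1$) and $|1/\zeta_F(s)|\ll|t|^{\varepsilon}$ by the Lindelöf-type consequence of GRH, which yields $L_k(x)=O(x^{1/2+\varepsilon})$. Conversely, $L_k(x)=O(x^{1/2+\varepsilon})$ forces $D_k(s)$ to be holomorphic on $\Re(s)>1/2$ by Abel summation; but any zero $\rho_0$ of $\zeta_F(s)$ with $\Re(\rho_0)>1/2$ satisfies $\Re((k+1)\rho_0)>(k+1)/2\geq 1$, a region in which $\zeta_F$ is known to be non-vanishing, so $\zeta_F((k+1)\rho_0)\neq 0$ and $\rho_0$ is a genuine pole of $D_k(s)$, contradicting the analyticity just derived. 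Hence no such $\rho_0$ exists and GRH for $\zeta_F(s)$ follows.

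The main obstacle is the standard but delicate juggling in the Perron estimate: balancing the abscissa $\sigma_0$, the truncation height $T$, and the bound $|1/\zeta_F(s)|\ll\log T$ coming from the zero-free region so as to land exactly on the shape $\zeta_F((k+1)\sigma_0)\,x\exp(-c'\sqrt{\log x})$; and, for the converse half of the equivalence, ensuring that the Abel-summation argument delivers genuine analyticity of $D_k(s)$ on the open half-plane $\Re(s)>1/2$ rather than mere convergence of the series.
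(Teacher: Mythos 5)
Your plan follows the paper's proof almost step for step: the same Dirichlet-series identity $\sum_A \lambda_k(A)N(A)^{-s}=\zeta_F((k+1)s)/\zeta_F(s)$ (which the paper derives via the convolution inverse $q_{k+1}*\lambda_k=\delta$ rather than directly from the Euler factor --- an immaterial difference), the same Perron integral shifted into the classical zero-free region with $T=\exp(\sqrt{\log x})$, and the same two-way argument for the GRH equivalence. Your converse direction is in fact slightly more complete than the paper's, since you explicitly check that a zero $\rho_0$ of $\zeta_F$ with $\Re\rho_0>1/2$ cannot be cancelled by a zero of $\zeta_F((k+1)s)$, because $(k+1)\Re\rho_0\geq 1$ lies in the region where $\zeta_F$ does not vanish.

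One point your plan glosses over, and which the paper must handle with a separate lemma: for $F\neq\mathbb{Q}$ the classical truncated Perron formula carries an error term $O(\psi(2x)x\log x/T)$ in which $\psi$ must dominate the individual coefficients $a_n=\sum_{N(A)=n}\lambda_k(A)$. These are bounded only by the number of ideals of norm $n$, which is $O(n^{\varepsilon})$ pointwise and no better, so this error is of size $x^{1+\varepsilon}/T$; with $T=\exp(c''\sqrt{\log x})$ that swamps the target bound $x\exp(-c'\sqrt{\log x})$. The paper circumvents this by invoking the Liu--Ye variant of Perron's formula, whose error term is $\sum_{x-x/\sqrt{T}<n\leq x+x/\sqrt{T}}|a_n|$, an average of the coefficients over a short interval, which is under control; it also notes that a possible Siegel zero of $\zeta_F$ renders the constants ineffective when $F\neq\mathbb{Q}$. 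Apart from this technicality (which disappears for $F=\mathbb{Q}$, the only case the paper writes out in full), your argument is sound.
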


Since $1<(k+1)(1-c/\sqrt{\log x})$ for $k \geq 1$ and large $x>0$, we see 
$\zeta((k+1)(1-c/\sqrt{\log x} ))=O(1)$. Thus, we can roughly write 
$L_k(x)=O(x\exp(-c\sqrt{\log x}))$.
The author and Minamide \cite{minamide} 
showed that 
$M_1(x)=O(x\exp(-c\sqrt{\log x}))$ and 
$L_1(x)=O(x\exp(-c\sqrt{\log x}))$ for arbitrary number field.

In addition, the number of $k$-free ideals of $O_F$ is estimated as 
the next theorem.

\begin{theo}\label{qk}
Let $k \geq 2$ and $d=[F:\mathbb{Q}]>1$. Then, 
we have  
\begin{align*}
Q_k(x)=\frac{c_F}{\zeta_F(k)}x+
\begin{cases}
O(x^{\frac{1}{2}}) \;\;\; \textrm{if $(d,k)=(2, 2)$}, \\
O(x^{\frac{1}{3}}\log x) \;\;\; \textrm{if $(d, k)=(2, 3)$}, \\
O(x^{\frac{1}{2}}\log x) \;\;\; \textrm{if $(d, k)=(3, 2)$}, \\
O(x^{\frac{d-1}{d+1}}) \;\;\; \textrm{otherwise.}
\end{cases}
\end{align*}
\end{theo}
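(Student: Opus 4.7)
The plan is to begin with the multiplicative identity
\[ |\mu_{k-1}(A)| = \sum_{B^k \mid A} \mu_1(B), \]
valid because $|\mu_{k-1}(P^m)| = 1$ exactly when $m < k$. Substituting this into the definition of $Q_k(x)$ and interchanging the order of summation yields
\[ Q_k(x) = \sum_{N(B)^k \leq x} \mu_1(B)\, A_F\!\left( x/N(B)^k \right), \]
where $A_F(y) := \#\{ C \in I_F : N(C) \leq y \}$ is Landau's ideal-counting function. This reduces the problem to a hyperbola-type estimate supported on the known asymptotic $A_F(y) = c_F y + O\!\left( y^{(d-1)/(d+1)} \right)$.

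Inserting Landau's formula, I split the right-hand side into a main term and a Landau-error term. The main term becomes $c_F x \sum_{N(B) \leq x^{1/k}} \mu_1(B)/N(B)^k$, which I extend to a sum over all $B$. Since $\sum_{B \in I_F} \mu_1(B)/N(B)^k = \zeta_F(k)^{-1}$, and the tail $\sum_{N(B) > x^{1/k}} N(B)^{-k}$ is $O(x^{1/k - 1})$ by partial summation from $A_F(y) \ll y$ together with $k \geq 2$, the main-term contribution is $c_F x / \zeta_F(k) + O(x^{1/k})$.

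The crux of the proof is the contribution of the Landau error, bounded by
\[ x^{(d-1)/(d+1)} \sum_{N(B) \leq x^{1/k}} N(B)^{-\alpha}, \qquad \alpha := \frac{k(d-1)}{d+1}. \]
An elementary check shows $\alpha > 1 \iff d(k-1) > k+1$, with $\alpha = 1$ only for $(d,k) \in \{(3,2),(2,3)\}$ and $\alpha < 1$ only for $(d,k) = (2,2)$. Applying $A_F(y) \ll y$ and partial summation, the inner sum is respectively $O(1)$, $O(\log x)$, or $O(x^{(1-\alpha)/k})$, so after multiplication the Landau-error contribution is $O(x^{(d-1)/(d+1)})$ in the generic case, $O(x^{1/3}\log x)$ and $O(x^{1/2}\log x)$ in the two boundary cases, and $O(x^{1/2})$ when $(d,k) = (2,2)$. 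In the generic case the residual $O(x^{1/k})$ from the main term is absorbed, since $\alpha > 1$ is equivalent to $(d-1)/(d+1) > 1/k$, and in the $(2,2)$ case the two errors both equal $O(x^{1/2})$.

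The main obstacle, such as it is, is the bookkeeping needed to confirm that the three regimes of $\alpha$ correspond exactly to the four branches stated in the theorem; beyond that, the argument rests entirely on Landau's ideal-counting theorem and routine partial summation.
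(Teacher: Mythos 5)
Your proposal is correct and follows essentially the same route as the paper: the identity $|\mu_{k-1}(A)|=\sum_{B^k\mid A}\mu_1(B)$, the reduction to $\sum_{N(B)^k\le x}\mu_1(B)[x/N(B)^k]_F$, Landau's theorem for the main term, and the case analysis on $\alpha=k(d-1)/(d+1)$ versus $1$ for the error term. No substantive differences.
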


The number of $k$-free positive integers was considered in \cite{cohen}, 
\cite{walfisz}, etc.


\section{Preliminary and Background} 

First, we give an important lemma. 
For a large real number $x>0$, 
we denote the number of non-zero ideals of $O_F$ 
whose norm is less than or equal to $x$ by $[x]_{F}$. 

\begin{lem}(See Berndt \cite{berndt}, Murty and Order \cite{murty} and other papers cited there.)
For a large real number $x>0$, we have 
\begin{align*}
[x]_F=c_Fx+O(x^{\frac{d-1}{d+1}})
\end{align*} 
where the constant $c_F$ is the residue of the Dedekind zeta function $\zeta_F(s)$ 
of $F$ at $s=1$. 
\end{lem}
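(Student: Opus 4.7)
The plan is to follow Landau's classical method: decompose the count by ideal class and reduce each piece to a lattice-point counting problem via the Minkowski embedding, then invoke Landau's sharp estimate for lattice points in norm-form bodies.

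First I would write $[x]_F = \sum_{\mathfrak{c} \in \mathrm{Cl}(F)} N_{\mathfrak{c}}(x)$, where $N_{\mathfrak{c}}(x)$ counts integral ideals in the class $\mathfrak{c}$ of norm at most $x$. Fixing an integral representative $\mathfrak{b}$ of $\mathfrak{c}^{-1}$, an integral ideal $A \in \mathfrak{c}$ with $N(A) \leq x$ corresponds uniquely to a nonzero $\alpha \in \mathfrak{b}$, taken modulo units, with $|N_{F/\mathbb{Q}}(\alpha)| \leq x N(\mathfrak{b})$. Via the Minkowski embedding $\sigma : F \hookrightarrow \mathbb{R}^d$, the ideal $\mathfrak{b}$ becomes a full rank lattice $\Lambda_{\mathfrak{b}}$, and after fixing a fundamental domain $D \subset \mathbb{R}^d$ for the multiplicative action of $O_F^{\times}$ on the nonzero vectors (which exists by Dirichlet's unit theorem in logarithmic form), we get $N_{\mathfrak{c}}(x) = \#\bigl(\Lambda_{\mathfrak{b}} \cap tR\bigr)$, where $R = \{v \in D : |\mathcal{N}(v)| \leq 1\}$, $\mathcal{N}$ is the norm form of degree $d$ on $\mathbb{R}^d$, and $t = (x N(\mathfrak{b}))^{1/d}$.

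Second, I would appeal to the Landau-type lattice-point estimate
\[
\#\bigl(\Lambda_{\mathfrak{b}} \cap tR\bigr) = \frac{\mathrm{vol}(R)}{\mathrm{covol}(\Lambda_{\mathfrak{b}})}\, t^d + O\!\bigl(t^{d(d-1)/(d+1)}\bigr).
\]
Summing over ideal classes and combining $\mathrm{vol}(R)$, $\mathrm{covol}(\Lambda_{\mathfrak{b}})$, the class number, the regulator, and the discriminant through the analytic class number formula produces the main term $c_F x$, while substituting $t = x^{1/d}$ converts the error into $O\!\bigl(x^{(d-1)/(d+1)}\bigr)$.

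The principal obstacle lies in the sharp lattice-point estimate of the second step: a naive Lipschitz-boundary argument (Davenport's lemma) only yields the error $O(t^{d-1}) = O(x^{(d-1)/d})$, which is too weak. Landau's improvement requires smoothing the indicator of $R$ at a scale $\delta t$, applying Poisson summation to the smoothed counting function, bounding the Fourier transform of the smoothed body via repeated integration by parts using the piecewise smoothness of the norm-form level set away from its singular locus, and finally optimizing $\delta$. The delicate technical point is controlling the contribution near the boundary of $D$ and near the vanishing locus of $\mathcal{N}$, where the fundamental domain develops cusps; this is precisely where the references Berndt \cite{berndt} and Murty--Order \cite{murty} carry out the detailed execution, and I would quote their bound rather than redo the Fourier analysis.
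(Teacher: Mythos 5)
The paper offers no proof of this lemma at all: it is quoted directly from Berndt and Murty--Van Order, so the only question is whether your sketch would assemble into an actual proof. Your first step is fine: the decomposition $[x]_F=\sum_{\mathfrak{c}}N_{\mathfrak{c}}(x)$, the bijection between ideals $A\in\mathfrak{c}$ with $N(A)\leq x$ and nonzero $\alpha\in\mathfrak{b}$ modulo units with $|N_{F/\mathbb{Q}}(\alpha)|\leq xN(\mathfrak{b})$, the passage to counting $\Lambda_{\mathfrak{b}}\cap tR$ with $t=(xN(\mathfrak{b}))^{1/d}$, and the recovery of $c_F$ via the class number formula are exactly the classical Weber--Landau reduction. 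But that reduction by itself yields only the error $O(t^{d-1})=O(x^{1-1/d})$, as you yourself note, and the whole content of the lemma is the improvement to the exponent $(d-1)/(d+1)$.

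The gap is in your second step, which is where all the difficulty lives. The lattice-point bound with error $O\bigl(t^{d(d-1)/(d+1)}\bigr)$ that you propose to quote is the Hlawka exponent, valid for convex bodies with nowhere-vanishing Gaussian curvature; the body $R$ is neither convex nor curved. Its boundary contains the flat conical faces of the unit fundamental domain (hyperplanes in logarithmic coordinates become radially flat hypersurfaces through the origin) together with a cusp at the origin where the level set $|\mathcal{N}(v)|=1$ is asymptotically tangent to the coordinate hyperplanes. A flat radial face that is rationally situated with respect to $\Lambda_{\mathfrak{b}}$ carries on the order of $t^{d-1}$ lattice points within bounded distance, so the smoothing-plus-Poisson scheme you outline stalls at $O(t^{d-1})$ no matter how the smoothing scale $\delta$ is optimized (exploiting the pairing of opposite faces under the unit action removes the set-theoretic ambiguity of the count but not this analytic boundary error). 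Worse, the attribution is wrong: neither cited reference proves the lattice-point estimate you want. Berndt obtains the exponent $(d-1)/(d+1)$ analytically from the functional equation of $\zeta_F(s)$ via the Chandrasekharan--Narasimhan framework (the exponent is $\frac{2A-1}{2A+1}$ with $A=d/2$ the total degree of the gamma factor), and Murty--Van Order likewise follow Landau's analytic contour method; no Fourier analysis of the smoothed fundamental domain appears there. As written, your proposal therefore reduces the lemma to an intermediate geometric estimate that is unproven, doubtful as stated, and not in the literature you invoke; the honest options are to cite the references for the lemma itself, as the paper does, or to switch at this point to the functional-equation method they actually use.
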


Next, we review some fundamental facts on arithmetical functions. 
In this note, an arithmetical function is a function from $I_F$ to $\mathbb{C}$.
For arithmetical functions $f$ and $g$, we define the Dirichlet convolution 
$f*g$ of them as 
\begin{align*}
f*g(A)=\sum_{D|A}f(D)g\left( \frac{A}{D} \right).
\end{align*}
This convolution is associative and  commutative.
The function $\delta$ such that $\delta(1)=1$ and $\delta(A)=0$ for $A \neq 1$ is 
the identity element of this convolution. For an arithmetical function $f$, 
there is the inverse of $f$ if and only if $f(1) \neq 0$. 
An arithmetical function $f$ is called multiplicative if $f(1)=1$ and $f(AB)=f(A)f(B)$ whenever $(A, B)=1$. 
For example,  $\mu_k$, $\lambda_k$  and  $\delta$ is multiplicative.
The Dirichlet convolution of two multiplicative function is also multiplicative 
and the Dirichlet inverse of multiplicative function is also multiplicative.

Now, we describe some properties of M{\"o}bius and Liouville functions of order $k$. 
We shall show some lemmas.

\begin{lem}\label{k-freelem} 

For a positive integer $k$, we have
\begin{align*}
|\mu_k(A)|=\sum_{D^{k+1}| A}\mu_1(D).
\end{align*}
\end{lem}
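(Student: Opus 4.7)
The plan is to show both sides of the identity are multiplicative arithmetical functions on $I_F$ and then verify agreement on prime powers. Since $\mu_k$ is multiplicative by construction, so is $|\mu_k|$, which handles the left-hand side.

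For the right-hand side, I would introduce an auxiliary function $g:I_F \to \mathbb{C}$ defined by $g(A) = \mu_1(D)$ if $A = D^{k+1}$ for some $D \in I_F$, and $g(A)=0$ otherwise. The map $D \mapsto D^{k+1}$ respects coprime factorizations, and $\mu_1$ is multiplicative, so $g$ is multiplicative. Then
\begin{align*}
\sum_{D^{k+1}|A}\mu_1(D) = \sum_{B \cdot E = A} g(B) = (g * \mathbf{1})(A),
\end{align*}
where $\mathbf{1}(A)=1$ for all $A$. Since $\mathbf{1}$ is multiplicative and the Dirichlet convolution of multiplicative functions is multiplicative (as recalled in the preliminary section), the right-hand side is multiplicative in $A$.

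It then suffices to verify the identity for $A=P^m$. On the left, $|\mu_k(P^m)|=1$ if $m \leq k$ and $0$ if $m > k$. On the right, the only divisors of $P^m$ of the form $D^{k+1}$ are $P^{j(k+1)}$ with $j(k+1) \leq m$, so
\begin{align*}
\sum_{D^{k+1}|P^m}\mu_1(D) = \sum_{0 \leq j \leq m/(k+1)} \mu_1(P^j),
\end{align*}
and $\mu_1(P^j)=0$ for $j \geq 2$. When $m \leq k$ only $j=0$ appears and the sum equals $1$; when $m \geq k+1$ both $j=0$ and $j=1$ appear, giving $1+(-1)=0$. This matches $|\mu_k(P^m)|$ in both cases, completing the proof.

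There is no genuine obstacle here: the argument is a standard multiplicativity reduction followed by a short case analysis on prime-power exponents. The only thing to be slightly careful about is correctly identifying the RHS as a convolution with the constant function $\mathbf{1}$ so that multiplicativity is transparent rather than checked by hand.
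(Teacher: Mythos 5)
Your proposal is correct and follows the same route as the paper: reduce to prime powers by multiplicativity and then check the identity for $A=P^m$ directly. The paper's proof is just a terser version of yours; your extra step of writing the right-hand side as the convolution $(g*\mathbf{1})(A)$ to justify its multiplicativity is a reasonable way to make explicit what the paper leaves implicit.
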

\begin{proof}
For a prime ideal $P$ and a positive integer $m$, 
we see 
\begin{align*}
|\mu_k(P^m)|=\sum_{D^{k+1}| P^m}\mu_1(D).
\end{align*} 
Since they are multiplicative functions, 
the assertion holds.  
\end{proof}

The next lemma plays an important role to prove Theorem \ref{mobius}.  
Let $\sigma_s(A)=\sum_{D|A}N(D)^s$ for $s \in \mathbb{C}$.

\begin{lem}\label{functionF}
Let $\alpha$ be a constant such that $(d-1)/(d+1) \leq \alpha < 1$. 
For $k\geq 2$ and $A \in I_F$, put
\begin{align*}
G_A(x)=\sum_{N(B) \leq x}\mu_{k-1}(B)\mu_{k-1}(A^{k-1}B).
\end{align*} 
Then,  
\begin{align*}
G_A(x)=\frac{c_F\mu_1(A)J_1(A)N(A)^{k-1}}{\zeta_F(k)J_k(A)}x 
+ O(x^{\frac{1}{k}}\sigma_{-\alpha}(A))
\end{align*}
unless $k\alpha = 1$.
\end{lem}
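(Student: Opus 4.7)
The plan is to exploit the multiplicativity of $\mu_{k-1}$ to decouple the two factors in the summand, reducing $G_A(x)$ to a count of $k$-free ideals coprime to $A$, which is then estimated via M{\"o}bius inversion combined with the asymptotic $[y]_F = c_F y + O(y^\alpha)$. Comparing prime exponents, $\mu_{k-1}(A^{k-1}B) \neq 0$ requires $(k-1)v_P(A) + v_P(B) \leq k-1$ for every prime $P$. Any $P$ with $v_P(A) \geq 2$ would contribute exponent $\geq 2(k-1) \geq k$ to $A^{k-1}$, so $A$ must be squarefree; and then for $P \mid A$ the inequality forces $v_P(B) = 0$, i.e.\ $(A,B) = 1$. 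Under these conditions $\mu_{k-1}(A^{k-1}) = \prod_{P \mid A}\mu_{k-1}(P^{k-1}) = \mu_1(A)$, and together with $\mu_{k-1}(B)^2 = |\mu_{k-1}(B)|$ this gives
\begin{align*}
G_A(x) = \mu_1(A) \sum_{\substack{N(B) \leq x \\ (B,A) = 1}} |\mu_{k-1}(B)|.
\end{align*}
If $A$ is not squarefree, $\mu_1(A) = 0$ and $G_A(x) = 0$, so the identity is trivial; from now on assume $A$ squarefree.

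Next, I apply Lemma \ref{k-freelem} to expand $|\mu_{k-1}(B)| = \sum_{D^k \mid B}\mu_1(D)$, swap summations, and set $B = D^k B'$ so that $(B,A) = 1$ factors as $(D,A) = (B',A) = 1$. The inner sum then counts ideals coprime to $A$ of norm at most $x/N(D)^k$, and a further M{\"o}bius inversion over divisors of $A$ together with the asymptotic for $[y]_F$ yields
\begin{align*}
\#\{C : N(C) \leq y,\ (C,A) = 1\} = \frac{c_F J_1(A)}{N(A)}\, y + O\!\left(y^\alpha \sigma_{-\alpha}(A)\right),
\end{align*}
where I have used $\sum_{E \mid A}\mu_1(E)/N(E) = \prod_{P \mid A}(1-N(P)^{-1}) = J_1(A)/N(A)$ for squarefree $A$ and bounded $\sum_{E \mid A} N(E)^{-\alpha}$ by $\sigma_{-\alpha}(A)$.

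Substituting the main piece of this approximation and recognising the Euler product
\begin{align*}
\sum_{(D,A) = 1} \frac{\mu_1(D)}{N(D)^k} = \prod_{P \nmid A}\left(1 - N(P)^{-k}\right) = \frac{N(A)^k}{\zeta_F(k)\, J_k(A)}
\end{align*}
reproduces the stated main term after multiplication by $\mu_1(A)$; the incomplete tail $\sum_{N(D) > x^{1/k}} N(D)^{-k} \ll x^{-(k-1)/k}$ contributes only $O(x^{1/k})$ by partial summation against Berndt's estimate. The main obstacle is the secondary error
\begin{align*}
\sigma_{-\alpha}(A)\, x^\alpha \sum_{N(D) \leq x^{1/k}} N(D)^{-k\alpha},
\end{align*}
whose inner sum — again by partial summation with $[y]_F \ll y$ — is of order $x^{(1-k\alpha)/k}$ when $k\alpha < 1$, of order $\log x$ when $k\alpha = 1$, and of order $1$ when $k\alpha > 1$. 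This case analysis, and in particular the logarithmic loss at the critical value $k\alpha = 1$, is precisely why that case is excluded from the statement.
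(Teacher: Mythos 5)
Your argument is correct and follows essentially the same route as the paper's proof: reduce $G_A(x)$ to $\mu_1(A)$ times the count of $k$-free ideals coprime to $A$, expand via Lemma \ref{k-freelem} and M{\"o}bius inversion over the divisors of $A$, apply the ideal-counting estimate $[y]_F = c_F y + O(y^{\alpha})$, and complete the Euler product $\sum_{(D,A)=1}\mu_1(D)N(D)^{-k}$. Even your closing case analysis of $\sum_{N(D)\le x^{1/k}}N(D)^{-k\alpha}$ (including the logarithmic loss at $k\alpha=1$) mirrors the paper's treatment of the error term.
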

\begin{proof}
This is shown by a similar argument in \cite{apostol}. 
If $(A, B) \neq 1$, then $\mu_{k-1}(A^{k-1}B)=0$. 
If $(A, B) =1$, we see 
\begin{align*}
\mu_{k-1}(B)\mu_{k-1}(A^{k-1}B)=|\mu_{k-1}(B)|\mu_1(A)
\end{align*}
by the property $\mu_k(A^k)=\mu_1(A)$. 
Hence, 
\begin{align*}
G_A(x)=\mu_1(A)\sum_{N(B) \leq x \atop (A, B)=1}|\mu_{k-1}(B)|. 
\end{align*} 
By Lemma \ref{k-freelem} and the formula 
\begin{align*}
\sum_{N(C) \leq X \atop (C, A)=1} 1 = 
\sum_{E|A}\mu_1(E) \left[ \frac{X}{N(E)} \right], 
\end{align*} 
we have 
\begin{align*}
G_A(x) &=\mu_1(A)\sum_{N(B) \leq x \atop (A, B)=1}\sum_{D^k | B}\mu_1(D) \\
&= \mu_1(A) \sum_{N(D^k) \leq x \atop (A, D)=1}\mu_1(D)
\sum_{N(C) \leq x/N(D^k) \atop (C, A)=1}1 \\
&= \mu_1(A) \sum_{N(D^k) \leq x \atop (A, D)=1}\mu_1(D) 
\sum_{E| A}\mu_1(E)\left[ \frac{x}{N(ED^k)} \right]_F. 
\end{align*}
By the estimate $[x]_F=c_Fx+O(x^{\alpha})$, the above expression is 
\begin{align}\label{daiji}
c_Fx\mu_1(A)\sum_{E|A}\frac{\mu_1(E)}{N(E)}
\sum_{N(D^k) \leq x \atop (A, D)=1}\frac{\mu_1(D)}{N(D^k)}
+O\left( \sum_{E|A}\sum_{N(D^k) \leq x \atop (A, D)=1}
 \frac{x^\alpha}{N(ED^k)^\alpha}  \right). 
\end{align}

Since   
\begin{align*}
\sum_{N(D^k) \leq x \atop (A, D)=1}\frac{\mu_1(D)}{N(D^k)}
&= \sum_{(A, D)=1}\frac{\mu_1(D)}{N(D^k)} 
+ O\left( \sum_{N(D)>\sqrt[k]{x}} \frac{1}{N(D)^k} \right) \\
&= \frac{1}{\zeta_F(k)}\prod_{P|A}\left( 1-N(P)^{-k} \right)^{-1} + 
O\left( x^\frac{1-k}{k} \right) \\
&= \frac{N(A)^k}{\zeta_F(k)J_k(A)}+O(x^\frac{1-k}{k})
\end{align*}
and 
$\sum_{E|A}\mu_1(E)/N(E)=J_1(A)/N(A)$, 
the first term in (\ref{daiji}) is 
\begin{align*}
c_Fx\frac{\mu_1(A)J_1(A)N(A)^{k-1}}{\zeta_F(k)J_k(A)}+O(x^{\frac{1}{k}}).
\end{align*}
Next, we consider the O-term in (\ref{daiji}). We see that
\begin{align*}
x^\alpha \sum_{E|A}\frac{1}{N(E)^\alpha}\sum_{N(D) \leq \sqrt[k]{x} \atop (A, D)=1}
 \frac{1}{N(D)^{k\alpha}} 
&< 
 x^\alpha \sigma_{-\alpha}(A)
 \sum_{N(D) \leq \sqrt[k]{x}}
 \frac{1}{N(D)^{k\alpha}}  
\\
&\ll \begin{cases}
x^{\frac{1}{k}}\sigma_{-\alpha}(A) \;\;\; \textrm{if $k\alpha \neq 1$,} \\
x^\alpha \sigma_{-\alpha}(A) \log x^{\frac{1}{k}} \;\;\; 
\textrm{if $k\alpha=1$.}
\end{cases}
 \end{align*}

\end{proof}

Suppose $k \geq 2$. Next, we introduce the function $q_k$ in order to 
get the generating function of $\lambda_k$.
We  define the arithmetical function $q_k$ as 
\begin{align*}
q_k(A)
=
\begin{cases}
1 \;\;\;\;\;\; \textrm{$A$ is $k$-free}  \\
0 \;\;\;\;\;\;  \textrm{otherwise},
\end{cases}
\end{align*}
Namely, $q_k=|\mu_{k-1}|$, that is, the characteristic function of 
the set of all $k$-free ideals of $O_F$. 
We see the function $q_k$  is the 
Dirichlet inverse of $\lambda_{k-1}$. 

\begin{lem}\label{hodai1} 
For $k \geq 2$,  we have $q_k*\lambda_{k-1}=\delta$. 
\end{lem}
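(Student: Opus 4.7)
The plan is to exploit multiplicativity to collapse the identity $q_k * \lambda_{k-1} = \delta$ to a calculation on prime powers. Since $q_k = |\mu_{k-1}|$ is multiplicative (as the absolute value of a multiplicative function) and $\lambda_{k-1}$ is multiplicative by construction, their Dirichlet convolution is multiplicative. The function $\delta$ is also multiplicative, and any two multiplicative functions agreeing on prime powers coincide. So it suffices to check $(q_k * \lambda_{k-1})(P^m) = 0$ for every prime ideal $P$ and every integer $m \geq 1$; the case $A = O_F$ is immediate from $q_k(O_F) = \lambda_{k-1}(O_F) = 1$.

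Fix $P$ and $m \geq 1$. Unfolding the convolution gives
\begin{align*}
(q_k * \lambda_{k-1})(P^m) = \sum_{j=0}^{m} q_k(P^j)\,\lambda_{k-1}(P^{m-j}).
\end{align*}
Since $q_k(P^j)$ equals $1$ when $j < k$ and $0$ otherwise, this reduces to
\begin{align*}
\sum_{i = \max(0,\, m-k+1)}^{m} \lambda_{k-1}(P^{i}),
\end{align*}
after the substitution $i = m - j$. From the definition, on powers of $P$ the function $\lambda_{k-1}$ takes the value $+1$ when the exponent is $\equiv 0 \pmod{k}$, the value $-1$ when the exponent is $\equiv 1 \pmod{k}$, and $0$ otherwise.

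The remaining work is a short case split. If $m < k$, the window of summation is $i = 0, 1, \ldots, m$, and within $\{0,1,\ldots,k-1\}$ only $i=0$ and $i=1$ contribute; the contributions $+1$ and $-1$ cancel (and when $m = 0$ one recovers the value $1$ at the identity, as required). If $m \geq k$, the window has length exactly $k$, i.e.\ it is a complete set of residues modulo $k$, so it again picks up exactly one $+1$ and one $-1$ and nothing else, yielding $0$. In both situations the sum vanishes, which is precisely the claim. There is no genuine obstacle here; the only point that requires attention is aligning the window length in the second case with the period $k$ of $\lambda_{k-1}$, and this is forced by the cutoff $q_k(P^j)=0$ for $j\geq k$.
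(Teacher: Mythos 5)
Your proof is correct and follows the same route as the paper: reduce to prime powers via multiplicativity and verify $(q_k*\lambda_{k-1})(P^m)=\delta(P^m)$ there. The paper simply asserts the prime-power identity, while you write out the window $\max(0,m-k+1)\le i\le m$ and the cancellation of the $+1$ and $-1$ contributions explicitly; the details check out.
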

\begin{proof}
For a prime ideal $P$ and a positive integer $m$, we see 
\begin{align*}
q_k*\lambda_{k-1}(P^m)=
\begin{cases}
1 \;\;\;\;\;\; \textrm{$m=0$,} \\
0 \;\;\;\;\;\; \textrm{otherwise}.
\end{cases}
\end{align*}
Since $q_k$ and $\lambda_{k-1}$ are multiplicative, 
$q_k * \lambda_{k-1}$ is multiplicative. 
Hence, the assertion holds.
\end{proof}

Now, we have the generating function of $\lambda_{k-1}$.

\begin{lem}\label{hodai2} Let $k \geq 2$. For $\rm{Re}\;s >1$, we have 
\begin{align*}
\frac{\zeta_F(ks)}{\zeta_F(s)}=\sum_{A \in I_F}\frac{\lambda_{k-1}(A)}{N(A)^s}
\end{align*}
where $\zeta_F(s)$ is the Dedekind zeta function of $F$. 
\end{lem}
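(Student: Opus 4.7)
The plan is to deduce the generating series directly from Lemma \ref{hodai1}, which already gives $q_k * \lambda_{k-1} = \delta$. Translating this convolution identity into Dirichlet series, I will obtain
\[
\left(\sum_{A \in I_F} \frac{q_k(A)}{N(A)^s}\right)\left(\sum_{A \in I_F} \frac{\lambda_{k-1}(A)}{N(A)^s}\right) = 1,
\]
provided both series converge absolutely. Since $|\lambda_{k-1}(A)|, q_k(A) \le 1$ for all $A$, both series are majorised by $\zeta_F(\operatorname{Re} s)$, which converges for $\operatorname{Re} s > 1$. Hence the identity above is valid in that half-plane, and the task reduces to computing the Dirichlet series of $q_k$.

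For that, I would exploit multiplicativity of $q_k$ to factor the series as an Euler product. Since $q_k(P^m) = 1$ for $0 \le m \le k-1$ and vanishes otherwise, the local factor at a prime ideal $P$ is
\[
\sum_{m=0}^{k-1} \frac{1}{N(P)^{ms}} = \frac{1 - N(P)^{-ks}}{1 - N(P)^{-s}}.
\]
Taking the product over all $P$ and recognising $\prod_P (1-N(P)^{-s})^{-1} = \zeta_F(s)$ as well as $\prod_P (1-N(P)^{-ks})^{-1} = \zeta_F(ks)$, the product collapses to $\zeta_F(s)/\zeta_F(ks)$. Inverting then yields the desired identity.

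No step poses a serious obstacle; the only care point is justifying absolute convergence, which both legitimises the convolution-to-multiplication passage and the Euler product expansion, and this follows at once from the crude bound $|\lambda_{k-1}|,\,q_k \le 1$. As an alternative route, one may compute the right-hand side directly via its Euler product: the local factor becomes $\sum_{j \ge 0} N(P)^{-jks} - \sum_{j \ge 0} N(P)^{-(jk+1)s} = (1-N(P)^{-s})/(1-N(P)^{-ks})$, which recombines over $P$ to $\zeta_F(ks)/\zeta_F(s)$ and bypasses Lemma \ref{hodai1} entirely; I would nonetheless prefer the first approach as it keeps the presentation consistent with the preparatory lemmas.
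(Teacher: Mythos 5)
Your proposal is correct and follows essentially the same route as the paper: both invoke Lemma \ref{hodai1} to turn the convolution identity $q_k * \lambda_{k-1} = \delta$ into a product of Dirichlet series, and both identify $\sum_A q_k(A)N(A)^{-s}$ with $\zeta_F(s)/\zeta_F(ks)$ via the Euler product of the local factors $1 + N(P)^{-s} + \cdots + N(P)^{-(k-1)s}$. Your explicit justification of absolute convergence for $\operatorname{Re} s > 1$ is a small improvement over the paper's purely formal manipulation.
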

\begin{proof} 
Note that for arithmetical functions $f$ and $g$ we formally see 
\begin{align*}
\left( \sum_{A \in I_F}\frac{f(A)}{N(A)^s} \right) 
\left( \sum_{A \in I_F}\frac{g(A)}{N(A)^s} \right) 
=
\left( \sum_{A \in I_F}\frac{f*g(A)}{N(A)^s} \right).
\end{align*} 
Thus, 
\begin{align*}
\left( \sum_{A \in I_F}\frac{\lambda_{k-1}(A)}{N(A)^s} \right) 
\left( \sum_{A \in I_F}\frac{q_k(A)}{N(A)^s} \right) 
=1
\end{align*} 
from Lemma \ref{hodai1}.
By the Euler product of the Dedekind zeta function, 
we see 
\begin{align*}
\frac{\zeta_F(s)}{\zeta_F(ks)} 
= \prod_P 
\left( 1+\frac{1}{N(P)^s}+\cdots +\frac{1}{N(P)^{(k-1)s}} \right) = 
\sum_{A \in I_F}\frac{q_k(A)}{N(A)^s}.
\end{align*} 
Therefore, 
\begin{align*}
 \sum_{A \in I_F}\frac{\lambda_{k-1}(A)}{N(A)^s} = 
 \frac{\zeta_F(ks)}{\zeta_F(s)}. 
\end{align*} 
\end{proof}

\section{Proof of Theorem \ref{mobius}}
We can check   
$\mu_k(P^m)=\sum_{D^k|P^m}\mu_{k-1}( P^m/D^{k})\mu_{k-1}( P^m/D)$ 
for a prime ideal $P$ and a integer $m>0$. 
Since the both sides are multiplicative, we have  
\begin{align*}
\mu_k(A)=\sum_{D^k| A}
\mu_{k-1}\left( \frac{A}{D^{k}} \right)\mu_{k-1}\left( \frac{A}{D} \right)  
\end{align*}
 for any $A \in I_F$. 
 By this expression, 
\begin{align*}
M_k(x) &= \sum_{N(A)\leq x}\sum_{D^k|A}
\mu_{k-1}\left(\frac{A}{D^k}\right)
\mu_{k-1}\left(\frac{A}{D} \right) \\
&= \sum_{N(D)^k \leq x}\sum_{N(B) \leq x/N(D)^k}\mu_{k-1}(B)\mu_{k-1}(D^{k-1}B) \\
&= \sum_{N(D)^k \leq x}G_D\left( \frac{x}{N(D)^k} \right).
\end{align*}
If $k\alpha \neq 1$, 
\begin{align*}
G_{A}\left( \frac{x}{N(A)^k} \right)
=
\frac{c_F\mu_1(A)J_1(A)}{\zeta_F(k)J_k(A)N(A)} x 
+ O\left(x^\frac{1}{k}\frac{\sigma_{-\alpha}(A)}{N(A)}\right)
\end{align*}
by Lemma \ref{functionF}. Thus, 
\begin{align}\label{daiji2}
M_k(x)=\frac{c_Fx}{\zeta_F(k)}
\sum_{N(A) \leq \sqrt[k]{x}}
\frac{\mu_1(A)J_1(A)}{J_k(A)N(A)} 
+ 
O\left( 
x^{\frac{1}{k}}
\sum_{N(A) \leq \sqrt[k]{x}}
\frac{\sigma_{-\alpha}(A)}{N(A)} \right).
\end{align}
Since 
\begin{align*}
\sum_{N(A) \leq \sqrt[k]{x}}
\frac{\mu_1(A)J_1(A)}{J_k(A)N(A)}
= \sum_{A \in I_F}
\frac{\mu_1(A)J_1(A)}{J_k(A)N(A)}-
\sum_{N(A)> x^\frac{1}{k}}
\frac{\mu_1(A)J_1(A)}{J_k(A)N(A)} \\
= \sum_{A \in I_F}
\frac{\mu_1(A)J_1(A)}{J_k(A)N(A)}
+O\left( \sum_{N(A)> x^\frac{1}{k}}\frac{1}{N(A)^k} \right),
\end{align*}
the first term in (\ref{daiji2}) is 
\begin{align*}
\frac{c_F}{\zeta_F(k)}x \sum_{A \in I_F}
\frac{\mu_1(A)J_1(A)}{J_k(A)N(A)}
+ O \left( x^\frac{1}{k} \right). 
\end{align*}
Since 
\begin{align*}
\sum_{N(A) \leq \sqrt[k]{x}}
\frac{\sigma_{-\alpha}(A)}{N(A)}
&= 
\sum_{N(A) \leq \sqrt[k]{x}}N(A)^{-1}
\sum_{D|A}N(D)^{-\alpha} \\
&= 
\sum_{N(C) \leq \sqrt[k]{x}}N(C)^{-1}
\sum_{N(D) \leq \sqrt[k]{x}/N(C)}N(D)^{-\alpha-1} \\ 
&\ll  \sum_{N(C) \leq \sqrt[k]{x}}N(C)^{-1}
\ll \log x^{\frac{1}{k}},
\end{align*}
the O-term in (\ref{daiji2}) is $\ll x^\frac{1}{k}\log x$. 
Therefore, our assertion is proved.

\section{Proof of Theorem \ref{liouville}} 
According to the custom, we write $s=\sigma + i t$ for a complex number $s$. 
To prove Theorem \ref{liouville} we use the following lemma which is a special case of 
the classical Perron's formula (see e.g. \cite{titchmarsh}, p.70 ). 

\begin{lem}\label{perron}
Let   
\begin{align*}
f(s) =\sum_{n=1}^{\infty} \frac{a_n}{n^s}, \;\;\;\;\;\;  
B(\sigma)=\sum_{n=1}^{\infty} \frac{|a_n|}{n^{\sigma}},
\end{align*}
$\psi(x)>0$ non-decreasing function such that $a_n \ll \psi (n)$, 
and $\sigma_a$ the abscissa of absolute convergence of $f(s)$. 
 Then, for $b> \sigma_a$, 
\begin{align*}
\sum_{n \leq x}a_n=\frac{1}{2\pi i}\int_{b-iT}^{b+iT} f(s)\frac{x^s}{s}ds
+ O\left( \frac{\psi(2x)x\log x}{T} \right) \\
+ O\left( \frac{x^bB(b)}{T} \right)
+ O \left( \psi(N) \min\left[ \frac{x}{T|x-N|}, 1 \right]  \right)
\end{align*}
where $N$ is the integer nearest to $x$ and $T$ is a sufficiently large positive number. 
\end{lem}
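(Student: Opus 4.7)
The plan is to derive the formula from the classical truncated discontinuous integral

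\begin{align*}
I(y,b,T) := \frac{1}{2\pi i}\int_{b-iT}^{b+iT}\frac{y^s}{s}\,ds,
\end{align*}

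for $y,b,T>0$, and then integrate term-by-term against the Dirichlet series $f(s)$. So the first step I would carry out is to establish the standard evaluation of $I(y,b,T)$: for $y\neq 1$, shift the contour to a large rectangle (to the left when $y>1$, to the right when $y<1$) and pick up the residue of $y^s/s$ at $s=0$, which is $1$; then bound the horizontal pieces by $\int_{0}^{\infty}y^{\sigma}/T\,d\sigma$ and the far vertical piece by $y^{\mp R}\log T/R$, letting $R\to\infty$. This yields $I(y,b,T) = \mathbf{1}_{y>1} + O\bigl(y^{b}/(T|\log y|)\bigr)$ when $|\log y|$ is bounded away from $0$; at $y=1$ a direct computation gives $I(1,b,T) = \tfrac12 + O(1/T)$, and a small refinement provides the uniform bound $I(y,b,T)-\mathbf{1}_{y>1}\ll \min(1,\,y^{b}/(T|\log y|))$.

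Next I would apply this pointwise with $y = x/n$. Because $b>\sigma_a$, the series $\sum a_n n^{-s} x^{s}/s$ converges absolutely and uniformly on the vertical segment, so Fubini permits

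\begin{align*}
\frac{1}{2\pi i}\int_{b-iT}^{b+iT} f(s)\frac{x^{s}}{s}\,ds
= \sum_{n=1}^{\infty} a_n\, I(x/n,b,T).
\end{align*}

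Using $I(x/n,b,T)=\mathbf{1}_{n<x}+\mathrm{error}$ produces the main term $\sum_{n\le x} a_n$ (with an harmless adjustment at $n=x$, which is nonzero only when $N=x$). The remaining job is to bound the contribution of the error terms by the three stated $O$-terms.

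For the error analysis I split $n$ into three ranges. When $n$ is far from $x$, say $n\le x/2$ or $n\ge 2x$, one has $|\log(x/n)|\gg 1$, so the error contribution is $\ll x^{b}T^{-1}\sum_{n}|a_n|/n^{b} = x^{b}B(b)/T$, which is the second $O$-term. When $n$ lies in the intermediate range $x/2<n<2x$ but $n\neq N$, use $|\log(x/n)|\asymp |x-n|/x$ together with $|a_n|\ll\psi(n)\le\psi(2x)$ to get a contribution

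\begin{align*}
\ll \frac{\psi(2x) x}{T}\sum_{\substack{x/2<n<2x\\ n\neq N}}\frac{1}{|x-n|}
\ll \frac{\psi(2x) x\log x}{T},
\end{align*}

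which gives the first $O$-term. Finally, the single term $n=N$ must be handled with the uniform bound $I(x/N,b,T)-\mathbf{1}_{N<x}\ll \min(x/(T|x-N|),1)$; multiplying by $|a_N|\ll\psi(N)$ yields exactly the third $O$-term.

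The main obstacle is the passage near $n=x$: the naive bound $y^{b}/(T|\log y|)$ blows up there, so one cannot just dump everything into the $x^{b}B(b)/T$ estimate. The remedy, captured by the $\min$ in the statement, is that the integrand $I(y,b,T)$ is bounded individually, and the careful bookkeeping of the worst term $n=N$ against the remaining near-$x$ terms is what separates the first and third error terms. Once the discontinuous integral is established with both the $1/(T|\log y|)$ bound and the uniform $O(1)$ bound, the splitting above assembles into the stated formula.
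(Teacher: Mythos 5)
Your proposal is correct, and it coincides with the paper's treatment: the paper gives no proof of this lemma, quoting it as the classical truncated Perron formula from Titchmarsh (p.~70), and your argument --- the discontinuous integral $I(y,b,T)$ with both the $y^{b}/(T|\log y|)$ bound and the uniform $O(1)$ bound, termwise integration justified by absolute convergence at $b>\sigma_a$, and the three-range split $\{n\le x/2 \text{ or } n\ge 2x\}$, $\{x/2<n<2x,\ n\neq N\}$, $\{n=N\}$ --- is exactly the standard proof in that source, with each range producing the corresponding $O$-term. Your handling of the two delicate points (the half-contribution $\delta(1)=\tfrac12$ when $x=N$ is an integer, absorbed into the third $O$-term, and the necessity of the uniform bound for the single term $n=N$ where $1/|\log(x/n)|$ blows up) is also the correct one.
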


When applying Lemma \ref{perron} 
to the Riemann zeta function, bounds for $a_n$ have no problem. 
However, 
to show our first assertion of Theorem \ref{liouville} in the case $F \neq \mathbb{Q}$, 
we need the following which is a new type of Perron's formula du to Liu and Ye \cite{liuye} as long as the author knows. 

\begin{lem}
With the same notation as in Lemma \ref{perron}, we have 
\begin{align*}
\sum_{n \leq x}a_n=\frac{1}{2\pi i}\int_{b-iT}^{b+iT} f(s)\frac{x^s}{s}ds
+O \left( \sum_{x-x/\sqrt{T}< n \leq x+x/\sqrt{T}}|a_n| \right) 
+ O \left( \frac{x^bB(b)}{\sqrt{T}} \right)
\end{align*}
\end{lem}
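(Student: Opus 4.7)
The plan is to start from the classical truncated Mellin identity
\begin{align*}
\frac{1}{2\pi i}\int_{b-iT}^{b+iT} \frac{y^s}{s}\,ds = \mathbf{1}_{y>1} + E(y,T,b),
\end{align*}
valid for $y > 0$, $y \neq 1$, with the standard pointwise bound $|E(y,T,b)| \ll y^b \min\bigl(1,\,1/(T|\log y|)\bigr)$. Substituting $y = x/n$, multiplying by $a_n$, and interchanging sum and integral (legitimate because $b > \sigma_a$) gives
\begin{align*}
\frac{1}{2\pi i}\int_{b-iT}^{b+iT} f(s)\frac{x^s}{s}\,ds = \sum_{n \leq x} a_n + \sum_{n=1}^{\infty} a_n\, E(x/n, T, b).
\end{align*}
The whole task then reduces to bounding the error sum on the right by the two announced terms, so that the isolated boundary contribution $\psi(N)\min[x/(T|x-N|),1]$ appearing in Lemma \ref{perron} is replaced by a short-interval sum of $|a_n|$, at the cost of weakening the saving from $T$ to $\sqrt{T}$.

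The key step is to split the sum over $n$ at the threshold $|n - x| \leq x/\sqrt{T}$. On the close range, use the trivial bound $|E(x/n,T,b)| \ll (x/n)^b \ll 1$ (valid because $n \asymp x$ there), producing the contribution
\begin{align*}
\ll \sum_{x - x/\sqrt{T} < n \leq x + x/\sqrt{T}} |a_n|,
\end{align*}
which is the first announced error term. On the far range $|n-x| > x/\sqrt{T}$, the main analytic input is the uniform lower bound $|\log(x/n)| \gg 1/\sqrt{T}$; this upgrades the min to $O(1/\sqrt{T})$, so $|E(x/n,T,b)| \ll (x/n)^b/\sqrt{T}$, and summing against $|a_n|$ yields $O(x^b B(b)/\sqrt{T})$, the second announced term. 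Notice that unlike Lemma \ref{perron}, no hypothesis on a majorant $\psi$ is needed, since the short-interval sum absorbs its role.

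The main obstacle is verifying the logarithmic lower bound uniformly in $n$. For $n$ comparable to $x$, one uses $|\log(x/n)| = |\log(1 + (n-x)/x)| \geq c|n-x|/x$, which exceeds $c/\sqrt{T}$ precisely when $|n-x| > x/\sqrt{T}$; for $n$ far from $x$ (say $n \geq 2x$ or $n \leq x/2$), $|\log(x/n)|$ is bounded below by a positive absolute constant and the claim is immediate. A minor technicality is the half-weight at $n = x$ if $x$ is an integer, but this point already lies in the short-interval sum and is therefore harmless.
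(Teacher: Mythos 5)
Your argument is correct. Note, however, that the paper gives no proof of this lemma at all: it is quoted as a known result of Liu and Ye \cite{liuye}, so there is nothing in the text to compare against. Your derivation --- starting from the truncated Mellin identity with the pointwise error bound $y^b\min\bigl(1,1/(T|\log y|)\bigr)$, splitting the sum over $n$ at $|n-x|\leq x/\sqrt{T}$, bounding the near range trivially by the short-interval sum of $|a_n|$, and using $|\log(x/n)|\gg 1/\sqrt{T}$ on the far range to get $x^bB(b)/\sqrt{T}$ --- is exactly the standard route by which this variant of Perron's formula is established in the cited reference, and all the steps (the Fubini interchange justified by $b>\sigma_a$, the two-case verification of the logarithmic lower bound, the absorption of the $n=N$ boundary term into the short-interval sum) check out. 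The only cosmetic caveat is that for the near-range bound $(x/n)^b\ll 1$ one should assume $T$ is at least a fixed constant so that $n\geq x(1-1/\sqrt{T})\geq x/2$, which is harmless since $T$ is taken large throughout the paper.
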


Now, we shall show the first assertion. 
For simplicity, we assume $F=\mathbb{Q}$. 
Using Lemma \ref{hodai2} and Perron's formula,  for large positive numbers $x$ and $T$,
\begin{align*}
L_k(x) = \frac{1}{2\pi i}
\int_{1+\frac{1}{\log x}-iT}^{1+\frac{1}{\log x}+iT}
\frac{\zeta((k+1)s)}{\zeta(s)}\frac{x^s}{s}ds + O\left(\frac{x\log x}{T}\right)
\end{align*}
where $\zeta(s)$ is the ordinary Riemann zeta function. 
It is well-known that  
$\zeta(s)$ has no zero for $\sigma \geq 1-c/\log(|t|+4)$. 
Moreover, we have 
\begin{align*}
\frac{1}{\zeta(s)} \ll \log|t|
\end{align*} 
for $\sigma \geq 1-c/\log(|t|+4)$ and $|t| \geq t_0$ for some $t_0>0$ (\cite{montgomery}, Theorem 6.7). 
Thus, there are positive constants $c$ and $t_0$ such that 
\begin{align*}
\frac{\zeta((k+1)s)}{\zeta(s)} \ll \zeta((k+1)(1-c/\log(|t|+4)))\log|t|
\end{align*} 
for $\sigma \geq 1-c/\log(|t|+4)$ and $|t| \geq t_0$. 
Therefore, by Cauthy's theorem, 
\begin{align*}
&\int_{1+\frac{1}{\log x}-iT}^{1+\frac{1}{\log x}-iT}
\frac{\zeta((k+1)s)}{\zeta(s)}\frac{x^s}{s}ds \\
&=
\left( \int_{1+\frac{1}{\log x}-iT}^{1-\frac{c}{\log T}-iT}+ 
\int_{1-\frac{c}{\log T}-iT}^{1-\frac{c}{\log T}+iT}+ 
\int_{1-\frac{c}{\log T}+iT}^{1+\frac{1}{\log x}+iT} 
\right)
\frac{\zeta((k+1)s)}{\zeta(s)}\frac{x^s}{s}ds \\
& := S_1 +S_2 +S_3 .
\end{align*}
We may have 
\begin{align*}
S_1
\ll \beta_{k, T}\log T \int_{1+\frac{1}{\log x}}^{1-\frac{c}{\log T}}
\frac{x^\sigma}{\sigma-iT}d\sigma 
\ll \beta_{k, T}\frac{\log T}{T}x, 
\end{align*}
and 
\begin{align*}
S_3
\ll \beta_{k, T}\log T \int_{1-\frac{c}{\log T}}^{1+\frac{1}{\log x}} 
\frac{x^\sigma}{\sigma+iT}d\sigma 
\ll \beta_{k, T}\frac{\log T}{T}x
\end{align*}
where 
\begin{align*}
\beta_{k, T}=\zeta((k+1)(1-c/\log T)).
\end{align*}

The second integral is 
\begin{align*}
S_2 &\ll 
\left( 
\int_{-T}^{-t_0}+ 
\int_{-t_0}^{t_0}+ 
\int_{t_0}^{T}
\right)
\frac
{\zeta \left((k+1)(1-\frac{c}{\log T}+it)\right)}
{\zeta (1-\frac{c}{\log T}+it)}
\frac
{x^{1-\frac{c}{\log T}+it}} 
{1-\frac{c}{\log T}+it}
dt 
\\  &\ll \beta_{k, T}\log T \int_{-T}^{-t_0} \frac{x^{1-\frac{c}{\log T}}}{t}dt 
+ \beta_{k, T}\log T \int_{t_0}^{T} \frac{x^{1-\frac{c}{\log T}}}{t} dt \\ 
&\ll \beta_{k, T} (\log T)^2 x^{1-\frac{c}{\log T}}.
\end{align*}
Therefore, 
\begin{align*}
L_k(x)=O\left( \beta_{k, T}\frac{x\log T}{T}\right) 
+ O\left( \beta_{k, T}(\log T)^2 x^{1-\frac{c}{\log T}} \right) +
O \left( \frac{x \log x}{T} \right)
.
\end{align*}
We can take $T=\exp(\sqrt{\log x})$ and  obtain 
\begin{align*}
L_k(x) \ll  \zeta((k+1)(1-c/\sqrt{\log x} )) x \exp(-c'\sqrt{\log x}) .
\end{align*} 
Thus, the first assertion is proved. 

Note that if we use a better result on the zero-free region of the Riemann zeta 
function, we get a better estimate. For example, by Lemma 12.3 in Ivic \cite{ivic}, 
we have 
\begin{align*}
L_k(x) \ll \zeta((k+1)(1-c/\frac{(\log x)^{\frac{3}{5}}}
{(\log \log x)^{\frac{1}{5}}}  )) x \exp \left(-c'\frac{(\log x)^{\frac{3}{5}}}
{(\log \log x)^{\frac{1}{5}}} 
\right).
\end{align*}

In the case $F \neq \mathbb{Q}$,  for possibility of the existence of Siegel zero of 
$\zeta_F(s)$,  the O-constant in the first assertion is not 
effective.

To end this section, we shall show the second assertion of Theorem \ref{liouville} 
which is an equivalent theorem of the Riemann hypothesis 
(RH for short). 
First, assume that $L_k(x)=O(x^{\frac{1}{2}+\varepsilon})$. 
Since  
\begin{align*}
\sum_{n \leq x}\frac{\lambda_k(n)}{n^\sigma} &= 
\frac{1}{x^{\sigma}}L_k(x)+\sigma\int_{1}^{x}\frac{L_k(u)}{u^{\sigma+1}}du \\
&= O(x^{\frac{1}{2}+\varepsilon-\sigma})
\end{align*}
by using the partial summation formula, 
$\zeta \left( (k+1)s \right)/\zeta (s)$ is analytic for $\sigma >1/2$. 
Thus, $\zeta(s)$ has no zero in this range, that is,  RH holds.  

Next, assume  RH. 
By Perron's formula, 
\begin{align*}
L_k(x)=\frac{1}{2\pi i}\int_{2-iT}^{2+iT}
\frac{\zeta \left( (k+1)s \right)}{\zeta(s)}\frac{x^s}{s}ds +
 O\left(\frac{x^2}{T}\right) 
\end{align*}
for large positive numbers $x$ and $T$. 
If RH holds, there exists a positive number $t_0$ such that 
\begin{align*}
 \frac{1}{\zeta(s)}  = O(|t|^\varepsilon)
\end{align*}
for $|t| > t_0$ and $\sigma \geq \frac{1}{2}$ 
from Theorem 14.2 in \cite{titchmarsh}, 
p.336.  
So 
\begin{align*}
 \frac{\zeta \left( (k+1)s \right)}{\zeta(s)} = O(|t|^\varepsilon)
\end{align*}
in $|t|>t_0$ and $\sigma > 1/2 + \delta$ for any $\delta>0$. 
By this estimate, 
we have 
\begin{align*}
L_k(x) &=\frac{1}{2\pi i}
\left( \int_{2-iT}^{\frac{1}{2}+\delta-iT}
+\int_{\frac{1}{2}+\delta-iT}^{\frac{1}{2}+\delta+iT}
+\int_{\frac{1}{2}+\delta+iT}^{2+iT} \right)
\frac{\zeta \left( (k+1)s \right)}{\zeta(s)}\frac{x^s}{s}ds +
 O\left(\frac{x^2}{T}\right) \\
&= O \left( x^2T^{\varepsilon-1}\right) 
+ O \left( x^{\frac{1}{2}+\delta}T^{\varepsilon} \right) 
+O \left( \frac{x^2}{T} \right) 
 \end{align*}
 for any small number $\delta>0$. Choosing $T=x^2$, we have 
 \begin{align*}
L_k(x)=O \left( x^{2\varepsilon}\right) 
+ O \left( x^{2\varepsilon+\frac{1}{2}+\delta} \right).   
\end{align*}
Since $\varepsilon$ and $\delta$ are arbitrary small number, 
the assertion holds.

Note that we may give another proof by using results on the ordinary Mobius function.
For a positive integer $k$, we have 
\begin{align*}
\lambda_k(A)=\sum_{D^{k+1}|A}\mu_1 \left( \frac{A}{D^{k+1}} \right).
\end{align*} 
(In the case $A=P^m$ where $P$ is a prime number and $m$ is a positive integer, 
we can easily see this. 
Since the both sides are multiplicative, the above equation holds.) 
We can also show our result using this formula and 
$\sum_{N(A) \leq x}\mu_1(x)=O(x^{\frac{1}{2}+\varepsilon})$ for any $\varepsilon>0$ 
if GRH holds (see \cite{titchmarsh}, p.370 or Theorem 1.3 of \cite{minamide}).

\section{Proof of Theorem \ref{qk}} 
In order to show Theorem \ref{qk}, 
we use the following proposition. 

\begin{prop}\label{k-freeprop} 
For a integer $k \geq 2$, we have 
\begin{align*}
Q_k(x)=\sum_{N(D) \leq \sqrt[k]{x}}\mu_1(D)
\left[ \frac{x}{N(D)^k} \right]_F.
\end{align*}
\end{prop}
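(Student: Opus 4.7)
The plan is to reduce the identity to an interchange of summation, using Lemma \ref{k-freelem} as the main input. Since $Q_k(x)$ counts $k$-free ideals of norm at most $x$ and $|\mu_{k-1}|$ is the characteristic function of such ideals, the starting point is $Q_k(x)=\sum_{N(A)\leq x}|\mu_{k-1}(A)|$.

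First I would apply Lemma \ref{k-freelem} with $k$ replaced by $k-1$ (which is legitimate because we assume $k\geq 2$, so $k-1\geq 1$) to obtain
\begin{align*}
|\mu_{k-1}(A)|=\sum_{D^k\mid A}\mu_1(D).
\end{align*}
Plugging this into the definition of $Q_k(x)$ gives a double sum over pairs $(A,D)$ with $N(A)\leq x$ and $D^k\mid A$.

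Next I would swap the order of summation, taking $D$ as the outer index. For fixed $D$, the inner sum ranges over $A\in I_F$ with $D^k\mid A$ and $N(A)\leq x$; writing $A=D^kB$ with $B\in I_F$ (a bijection between such $A$ and ideals $B$), the constraint becomes $N(B)\leq x/N(D)^k$. The number of such $B$ is by definition $[x/N(D)^k]_F$, and the constraint forces $N(D)^k\leq x$, i.e. $N(D)\leq x^{1/k}$, outside of which the inner count is zero and so terms may be dropped. Combining these observations yields the stated formula
\begin{align*}
Q_k(x)=\sum_{N(D)\leq\sqrt[k]{x}}\mu_1(D)\left[\frac{x}{N(D)^k}\right]_F.
\end{align*}

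The argument is essentially routine once Lemma \ref{k-freelem} is available; no serious obstacle arises. The only point deserving a brief check is the rigorous justification of the Fubini-style swap, but since both sums are finite (all but finitely many $A$ with $N(A)\leq x$ contribute, and for each $A$ only finitely many $D$ with $D^k\mid A$ are involved) absolute convergence is automatic and the interchange is unconditional.
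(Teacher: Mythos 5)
Your proposal is correct and follows essentially the same route as the paper: apply Lemma \ref{k-freelem} (with $k$ shifted to $k-1$) to write $|\mu_{k-1}(A)|=\sum_{D^k\mid A}\mu_1(D)$, then interchange the order of summation and count ideals $B$ with $N(B)\leq x/N(D)^k$. The extra remarks on the legitimacy of the swap and the truncation to $N(D)\leq x^{1/k}$ are fine but not needed beyond what the paper already does.
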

\begin{proof}
By Lemma \ref{k-freelem}, 
\begin{align*}
Q_k(x) &= 
\sum_{N(A) \leq x}\sum_{D^{k}| A}\mu_1(D) \\
&= \sum_{N(D)^k \leq x}\sum_{N(B)\leq x/N(D)^k}\mu_1(D) 
= \sum_{N(D)^k \leq x}\mu_1(D)\left[  \frac{x}{N(D)^k} \right]_F.
 \end{align*}
\end{proof}

Let $R(x)=[x]_F-c_Fx$. 
By Proposition \ref{k-freeprop}, we have 
\begin{align*}
Q_k(x) &=\sum_{N(D) \leq \sqrt[k]{x}}\mu_1(D)
\left[ \frac{x}{N(D)^k} \right]_F \\
&=c_Fx\sum_{N(D) \leq \sqrt[k]{x}}\frac{\mu_1(D)}{N(D)^k}
+ \sum_{N(D) \leq \sqrt[k]{x}}\mu_1(D)R\left(\frac{x}{N(D)^k}\right) \\
&=\frac{c_Fx}{\zeta_F(k)} - c_Fx\sum_{N(D) > \sqrt[k]{x}}\frac{\mu_1(D)}{N(D)^k} + 
\sum_{N(D) \leq \sqrt[k]{x}}\mu_1(D)R\left(\frac{x}{N(D)^k}\right). 
\end{align*}
By the partial summation formula, we have 
\begin{align*}
\sum_{N(D) > \sqrt[k]{x}}\frac{\mu_1(D)}{N(D)^k} =O\left( x^{\frac{1}{k}-1}\right). 
\end{align*}
Thus, the second term is $O(x^{\frac{1}{k}})$. 

Since $R(x)=O(x^{1-\frac{2}{d+1}})$, the third term is  
\begin{align*}
\sum_{N(D) \leq \sqrt[k]{x}}\mu_1(D)R\left(\frac{x}{N(D)^k}\right)
&\ll 
 x^{1-\frac{2}{d+1}}
\sum_{N(D) \leq \sqrt[k]{x}}\frac{1}{N(D)^{k(1-\frac{2}{d+2})} }. 
\end{align*} 
We can check that 
\begin{align*}
 x^{1-\frac{2}{d+1}}
\sum_{N(D) \leq \sqrt[k]{x}}\frac{1}{N(D)^{k(1-\frac{2}{d+1})} }
=
\begin{cases}
O(x^{1-\frac{2}{d+1}}) \;\;\; \textrm{if $k(1-\frac{2}{d+1}) >1$,} \\
O(x^{\frac{1}{2}}) \;\;\; \textrm{if $(d, k)=(2, 2)$,} \\
O(x^{\frac{1}{3}}\log x) \;\;\; \textrm{if $(d, k)=(2, 3)$,} \\
O(x^{\frac{1}{2}}\log x) \;\;\; \textrm{if $(d, k)=(3, 2)$.}
\end{cases}
\end{align*}
Therefore, we have the assertion.



\quad

\quad

\noindent
Yusuke Fujisawa \\
Graduate School of Mathematics \\
Nagoya University \\
Chikusa-ku, Nagoya 464-8602 \\
Japan \\

\newpage

\end{document}